\definecolor{darkgreen}{rgb}{0.0, 0.7, 0.0}
\definecolor{purple}{rgb}{0.5, 0.0, 0.5}
\definecolor{red}{rgb}{0.8, 0.2, 0.0}
\newtheorem{thm}{Theorem}[section]
\newtheorem{bthm}{Theorem}
\newtheorem{bcor}{Corollary}
\newtheorem{lemma}[thm]{Lemma}
\newtheorem{prop}[thm]{Proposition}
\numberwithin{equation}{section}
\theoremstyle{definition}
\newtheorem{defi}[thm]{Definition}
\theoremstyle{remark}
\newtheorem{remark}[thm]{Remark}
\newcommand{\C}{\mathbb{C}}
\def \Im{{\rm Im}}
\def \P{\mathbb{P}}
\def \F{\mathcal F}
\def \L{\mathcal L}
\def \E{\mathcal E}
\def \G{\mathcal G}
\def \U{\mathcal U}
\def\O{\mathcal O}
\def\M0{\mathcal M^0}
\DeclareMathOperator{\Proj}{{Proj}}
\DeclareMathOperator{\Sym}{{Sym}}
\def\B{\mathbf{B}}
\title[On partially ample Ulrich bundles]{On partially ample Ulrich bundles}
\author[A.F. Lopez, D. Raychaudhury]{Angelo Felice Lopez and Debaditya Raychaudhury}
\address{\hskip -.43cm Angelo Felice Lopez, Dipartimento di Matematica e Fisica, Universit\`a di Roma
Tre, Largo San Leonardo Murialdo 1, 00146, Roma, Italy. e-mail {\tt angelo.lopez@mat.uniroma3.it}}
\address{\hskip -.43cm Debaditya Raychaudhury, Department of Mathematics, University of Arizona, 617 N Santa Rita Ave., Tucson, AZ 85721, USA. email: {\tt draychaudhury@math.arizona.edu}}
\thanks{The first author is a member of the GNSAGA group of INdAM and was partially supported by PRIN ``Advances in Moduli Theory and Birational Classification''.}
\thanks{{\it Mathematics Subject Classification} : Primary 14F06. Secondary 14J60.}
\begin{document}

\begin{abstract} 
We characterize $q$-ample Ulrich bundles on a variety $X \subseteq \P^N$ with respect to $(q+1)$-dimensional linear spaces contained in $X$.
\end{abstract}

\maketitle

\section{Introduction}

Let $X \subseteq \P^N$ be a smooth variety of dimension $n \ge 1$. The study of positivity properties of vector bundles $\E$ on $X$ is a classical one. Starting with Hartshorne's pioneering paper \cite{h}, several positivity notions have been introduced, among which, perhaps, the most important one is ampleness. The latter amounts to say that the tautological line bundle $\O_{\P(\E)}(1)$ is ample. One possible weakening of this notion, so that some properties are maintained, is $q$-ampleness, that we now recall (see for example \cite{to} and references therein).

\begin{defi}
Let $q \ge 0$ and let $\L$ be a line bundle on a scheme $Y$. We say that $\L$ is {\it $q$-ample} if for every coherent sheaf $\F$ on $Y$, there exists an integer $m_0>0$ such that $H^i(\F(m\L))=0$ for $m \ge m_0$ and $i > q$. Let $\E$ be a vector bundle on $Y$. We say that $\E$ is {\it $q$-ample} if $\O_{\P(\E)}(1)$ is $q$-ample. 
\end{defi}

In this paper we are interested in studying the above notion for a special class of vector bundles, namely for Ulrich bundles, that is bundles $\E$ such that $H^i(\E(-p))=0$ for all $i \ge 0$ and $1 \le p \le n$. The importance of Ulrich bundles is well-known (see for example \cite{es, be, cmp} and references therein). Positivity properties of Ulrich bundles have been studied recently \cite{lo, lm, ls, lms1, lms2}. In particular, in \cite[Thm.~1]{ls}, we showed that an Ulrich bundle $\E$ is ample (that is $0$-ample) if and only if either $X$ does not contain lines or $\E_{|L}$ is ample on any line $L \subset X$. We prove here a generalization of this result.

\begin{bthm} 
\label{ulrq}

\hskip 3cm

Let $X \subset \P^N$ be a smooth variety of dimension $n \ge 1$. Let $\E$ be an Ulrich vector bundle and let $q \ge 0$ be an integer. Then the following are equivalent:
\begin{itemize} 
\item[(i)] $\E$ is $q$-ample;
\item[(ii)] either $X$ does not contain a linear space of dimension $q+1$, or $\E_{|M}$ does not have a trivial direct summand for every linear space $M \subseteq X$ of dimension $q+1$;
\item[(iii)] either $X$ does not contain a linear space of dimension $q+1$, or $h^0(\E^*_{|M})=0$ for every linear space $M \subseteq X$ of dimension $q+1$.
\end{itemize} 
\end{bthm}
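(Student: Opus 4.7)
The plan is to rephrase (i)--(iii) in terms of the tautological line bundle $L := \O_{\P(\E)}(1)$ on $\P(\E)$, deduce (ii)$\iff$(iii) from a splitting analysis on linear subspaces, and deduce (ii)$\iff$(i) from a description of $\mathbb B_{+}(L)$ together with the characterization of $q$-ampleness by Totaro.

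\textbf{Step 1 (splitting of $\E_{|M}$).} I would first establish that for every linear subspace $M \subseteq X$ one has $\E_{|M} \cong \bigoplus_i \O_M(a_i)$ with $a_i \ge 0$. The splitting should follow from Horrocks' criterion on $M$: the Ulrich vanishings $H^i(\E(-p))=0$ for $1 \le p \le n$, $i \ge 0$, propagate to intermediate-cohomology vanishings for $\E_{|M}$ on $\P^{\dim M}$ via an iterated general hyperplane section argument cutting $M$ out in $X$. The inequalities $a_i \ge 0$ come from the global generation of Ulrich bundles.

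\textbf{Step 2 ((ii)$\iff$(iii) and (i)$\Rightarrow$(ii)).} With Step 1 in hand, $\E_{|M}$ admits a trivial summand iff some $a_i = 0$ iff $h^0(\E^*_{|M}) = \sum_i h^0(\O_M(-a_i)) > 0$, proving (ii)$\iff$(iii). For (i)$\Rightarrow$(ii): if such a trivial summand exists for some $M \cong \P^{q+1}$, the projection $\E_{|M}\twoheadrightarrow \O_M$ produces a section $s\colon M \hookrightarrow \P(\E)$ with $s^{*}L = \O_M$; since the trivial line bundle on $\P^{q+1}$ is not $q$-ample (it violates the $q$-ample vanishing tested on $\omega_{\P^{q+1}}$) and $q$-ampleness descends to closed subschemes by extension-by-zero, $L$ cannot be $q$-ample, contradicting (i).

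\textbf{Step 3 ((ii)$\Rightarrow$(i), the main obstacle).} Ulrich bundles are nef (see e.g.\ \cite{lms1}), so $L$ is nef. By Totaro's characterization $L$ is $q$-ample iff $\dim \mathbb B_{+}(L) \le q$, and by Nakamaye's theorem $\mathbb B_{+}(L) = \operatorname{Null}(L)$ is the union of positive-dimensional irreducible $Z \subseteq \P(\E)$ with $L_{|Z}$ not big. Hence it suffices to show that every irreducible $Z \subseteq \P(\E)$ with $\dim Z \ge q+1$ and $L_{|Z}$ not big descends via $\pi\colon\P(\E)\to X$ to a linear subspace $M := \pi(Z) \subseteq X$ of dimension $\ge q+1$ such that $\E_{|M}$ has a trivial summand. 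After discarding fiber contributions (on which $L$ is ample) one may replace $Z$ by a section over $M$ coming from a quotient $\E_{|M}\twoheadrightarrow \N$ with $\N$ not big; the technical core is to show that $M$ is linear and that $\N \cong \O_M$. My plan is to slice by curves: any curve $C \subseteq M$ with $\deg(\N_{|C}) = 0$ corresponds to a trivial summand of $\E_{|C}$ and hence, by the $q=0$ case \cite[Thm.~1]{ls}, must be a line, so $M$ is swept out by lines on which $\E$ has a trivial summand. An Ulrich-rigidity argument should then force these lines to span a linear subspace and the pointwise trivial summands to glue into a global $\O_M \hookrightarrow \E_{|M}$. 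Upgrading the line-wise triviality into a global trivial summand on a linear $M$ is the principal obstacle I foresee.
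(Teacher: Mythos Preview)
Your Step 1 contains a genuine error, and its conclusion is false. A linear subspace $M\subseteq X$ is not cut out of $X$ by hyperplane sections (a hyperplane section of $X$ has degree $\deg X$, not $1$), so the Ulrich vanishings on $X$ do not propagate to intermediate-cohomology vanishings for $\E_{|M}$ and Horrocks does not apply. In fact the splitting fails already on $Q^4\cong\mathrm{Gr}(2,4)\subset\P^5$: the universal quotient bundle is rank-$2$ Ulrich, yet its restriction to a plane of the form $\{V:\ell\subset V\}$ is $T_{\P^2}(-1)$, which is indecomposable. Fortunately Step 1 is unnecessary for (ii)$\iff$(iii): since $\E$ is $0$-regular it is globally generated, and for any globally generated bundle $\G$ on a projective variety $Z$ one has $h^0(\G^*)>0$ if and only if $\G$ has a trivial summand (given a nonzero $\phi\colon\G\to\O_Z$, pick $s\in H^0(\G)$ with $\phi(s)\ne 0$; then $\phi\circ s$ is a nonzero constant and splits $\phi$). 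This is exactly what the paper invokes via \cite[Lemma 3.9]{oa}.

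Step 3 is where the real content lies, and your route through $\mathbb B_+(L)$ and Nakamaye manufactures the gluing obstacle you flag. The paper bypasses it entirely by staying with the morphism $\varphi=\varphi_{|L|}\colon\P(\E)\to\P H^0(\E)$, available because $\E$ is globally generated. Sommese's criterion \cite[Prop.~1.7]{s} says that $\E$ is $q$-ample iff every fibre of $\varphi$ has dimension $\le q$; so if $\E$ is not $q$-ample, some fibre $\varphi^{-1}(y)$ has dimension $\ge q+1$, and $\pi$ embeds it isomorphically onto $\Pi_y\subseteq X$. The only Ulrich-specific input is \cite[Thm.~2]{ls}: for an Ulrich bundle each $\Pi_y$ is a \emph{linear} subspace of $X$. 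A direct Grassmannian argument (all the $(r-1)$-planes $P_x$, $x\in\Pi_y$, pass through a common point, forcing a trivial factor in the pullback of the universal bundle) then gives $\E_{|\Pi_y}\cong\O_{\Pi_y}\oplus\G$, and restricting to any $(q+1)$-dimensional linear $M\subseteq\Pi_y$ contradicts (ii). No gluing is needed: the trivial summand is produced globally from the geometry of a single fibre of $\varphi$. Your curve-slicing plan is effectively an attempt to re-derive the linearity of $\Pi_y$ and the splitting from the $q=0$ case, but the direct use of \cite[Thm.~2]{ls} together with Sommese's fibre criterion is both shorter and sidesteps the obstacle you identified.
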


We also have the following consequence.

\begin{bcor}
\label{ulrqc}
Let $\E$ be an Ulrich vector bundle on $X \subseteq \P^N$. Then:
\begin{itemize}
\item[(i)] $\E$ is $(n-1)$-ample if and only if $(X, \O_X(1), \E) \ne (\P^n, \O_{\P^n}(1), \O_{\P^n}^{\oplus r})$. 
\item[(ii)] If $n \ge 2, (X, \O_X(1), \E) \ne (\P^n, \O_{\P^n}(1), \O_{\P^n}^{\oplus r})$ and $\rho(X)=1$, then $\E$ is $(n-2)$-ample.
\end{itemize}
\end{bcor}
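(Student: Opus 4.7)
The plan is to deduce both parts directly from Theorem \ref{ulrq} applied with $q = n-1$ and $q = n-2$ respectively. Two auxiliary facts drive the argument. First, on $(\P^n, \O_{\P^n}(1))$ every Ulrich bundle is trivial, i.e.\ isomorphic to $\O_{\P^n}^{\oplus r}$: since Ulrich bundles are ACM, Horrocks' criterion splits $\E = \bigoplus_i \O_{\P^n}(a_i)$, and then the required Ulrich vanishings at $p = 1$ and $p = n$ force each $a_i = 0$. Second, if $X \subseteq \P^N$ is a smooth $n$-fold with $\rho(X) = 1$ containing a linear $M \cong \P^{n-1}$, then $X = \P^n$.

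To verify the second fact, set $H = \O_X(1)$ and $d := \deg X = H^n$. Since $M$ is a linear $\P^{n-1}$, one has $M \cdot H^{n-1} = 1$; combined with the $\Q$-proportionality $[M] = (1/d)[H]$ in $\NS(X)_\Q$ (forced by $\rho(X) = 1$), intersecting with $M \cdot H^{n-2}$ yields $M^2 \cdot H^{n-2} = 1/d$. On the other hand, via the projection formula for the closed embedding $M \cong \P^{n-1} \hookrightarrow X$, this self-intersection equals $\deg N_{M/X} \in \Z$. Hence $d = 1$, so $X$ is a linear $\P^n$.

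For (i), apply Theorem \ref{ulrq} with $q = n-1$: an $n$-dimensional linear space $M \subseteq X$ must equal the irreducible $X$, so $X$ admits such an $M$ iff $X = \P^n$ with $\O_X(1) = \O_{\P^n}(1)$, in which case the first fact forces $\E = \O_{\P^n}^{\oplus r}$ and $\E_{|M} = \E$ has a trivial summand. Thus the obstruction in Theorem \ref{ulrq}(ii) occurs exactly when $(X,\O_X(1),\E) = (\P^n, \O_{\P^n}(1), \O_{\P^n}^{\oplus r})$. For (ii), apply the theorem with $q = n-2 \ge 0$: the first fact together with the hypothesis forces $X \ne \P^n$, and then the second fact rules out any $(n-1)$-dimensional linear subspace of $X$, so Theorem \ref{ulrq}(ii) immediately yields $(n-2)$-ampleness. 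The only non-routine input is the numerical argument for the second fact; the rest is straightforward bookkeeping.
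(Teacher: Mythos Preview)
Your proof is correct and follows essentially the same strategy as the paper: both parts are reduced to Theorem~\ref{ulrq}, with (i) hinging on the classification of Ulrich bundles on $(\P^n,\O_{\P^n}(1))$ and (ii) on showing that a smooth $n$-fold with $\rho(X)=1$ containing a linear $\P^{n-1}$ has degree $1$. The only cosmetic differences are that the paper cites \cite{es,be} rather than invoking Horrocks directly, and for (ii) it uses the integral ample generator $A$ (writing $M\equiv aA$, $H\equiv hA$, so $1=MH^{n-1}=ah^{n-1}A^n$) instead of your normal-bundle integrality trick---a slightly more direct route to the same conclusion.
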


In recent years, positivity of vector bundles have been measured by augmented and restricted base loci (see for example \cite{bkkmsu, fm}). In the last section we will ask a question about augmented base loci of Ulrich bundles arising from the above theorem. 

\section{Notation}

Throughout the paper we work over the field $\C$ of complex numbers. A {\it variety} is by definition an integral separated scheme of finite type over $\C$. 

\section{Generalities on vector bundles}

In this section we collect some general facts about vector bundles and some notation that will be used later.

\begin{defi}
\label{not}

Let $\E$ be a rank $r$ vector bundle on $X$. We set $\P(\E) = \Proj(\Sym(\E))$ with projection map $\pi : \P(\E) \to X$ and tautological line bundle $\O_{\P(\E)}(1)$. If $\E$ is globally generated we define the map determined by $|\O_{\P(\E)}(1)|$ as
$$\varphi=\varphi_{\E} = \varphi_{\O_{\P(\E)}(1)} : \P(\E) \to \P H^0(\E)$$
and we set
$$\Pi_y = \pi(\varphi^{-1}(y)), y \in \varphi(\P(\E)) \ \hbox{and} \ P_x = \varphi(\P(\E_x)), x \in X.$$
We also define the map determined by $\E$ as
$$\Phi=\Phi_{\E} : X \to {\mathbb G}(r-1, \P H^0(\E))$$
given, for any $x \in X$, by $\Phi(x)=[P_x] \in {\mathbb G}(r-1, \P H^0(\E))$. 
\end{defi}

We record some simple but useful facts. The first one is essentially contained in \cite[Proof of Lemma 2.4, page 426]{ta}\footnote{See also {\url https://mathoverflow.net/questions/395472/trivial-subbundle-of-universal-bundle-on-the-grassmannian-mathbbgk-n}.}.

\begin{lemma}
\label{tango}
Let $V$ be a vector space and let $P \in \P V$ be a point. Let $Y \subset {\mathbb G}(k, \P V)$ be a subvariety such that, for every $y \in Y$, the corresponding $k$-plane contains $P$. If $\U$ is the universal subbundle of ${\mathbb G}(k, \P V)$,  then $\U_{|Y} \cong \O_Y \oplus \G$, for some rank $k$ vector bundle $\G$ on $Y$.
\end{lemma}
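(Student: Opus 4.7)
The plan is to exhibit an explicit trivial subbundle $\O_Y \hookrightarrow \U_{|Y}$ and then construct a retraction, so that the short exact sequence
\[
0 \to \O_Y \to \U_{|Y} \to \G \to 0
\]
splits. Recall that $\U \subset V \otimes \O_{\mathbb{G}(k,\P V)}$ is the universal subbundle whose fiber over a point $y \in \mathbb{G}(k,\P V)$ is the $(k{+}1)$-dimensional linear subspace of $V$ corresponding to the $k$-plane; restricting gives $\U_{|Y} \subset V \otimes \O_Y$.

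First, I would fix a nonzero vector $v_0 \in V$ lying on the line through the origin represented by $P$. By hypothesis, every $k$-plane parametrized by $y \in Y$ contains $P$, which means $v_0 \in \U_y$ for every $y \in Y$. Thus the constant section $y \mapsto v_0$ of $V \otimes \O_Y$ actually factors through $\U_{|Y}$ and defines a nowhere-vanishing section; equivalently, it gives a subbundle inclusion $\iota : \O_Y \hookrightarrow \U_{|Y}$.

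Next I would build the splitting. Choose any linear functional $\lambda : V \to \C$ with $\lambda(v_0)=1$ (for instance, pick a linear complement $W$ of $\C v_0$ in $V$ and let $\lambda$ be the projection onto $\C v_0$ followed by $v_0 \mapsto 1$). Then $\lambda$ induces a morphism of trivial bundles $V \otimes \O_Y \to \O_Y$, and precomposing with the inclusion $\U_{|Y} \hookrightarrow V \otimes \O_Y$ gives a morphism $\rho : \U_{|Y} \to \O_Y$ with $\rho \circ \iota = \id_{\O_Y}$. Hence $\iota$ splits, and setting $\G := \ker \rho$ we obtain $\U_{|Y} \cong \O_Y \oplus \G$ with $\rk \G = k$.

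There is no real obstacle here: the whole argument is linear-algebraic and the only ingredient beyond standard facts is the observation that a chosen vector $v_0$ on the line $P$ lies in every fiber of $\U_{|Y}$, which is immediate from the hypothesis. The choice of splitting $\lambda$ is not canonical, but the resulting decomposition $\U_{|Y} \cong \O_Y \oplus \G$ is all that is claimed.
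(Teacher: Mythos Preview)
Your proof is correct. Both your argument and the paper's rest on the same non-canonical choice---a complement to the line $\C v_0$ in $V$, which is exactly the data of your functional $\lambda$---but the paper packages it differently. There one fixes a splitting $V = V_1 \oplus V'$ with $V_1 = \C v_0$, identifies the Schubert variety $G_P = \{[U] : P \in \P U\}$ with the smaller Grassmannian $G(k-1,V')$ via $[W] \mapsto [V_1 \oplus W]$, observes that under this identification the universal subbundle pulls back to $\O \oplus \U'$, and then restricts to $Y \subseteq G_P$. Your approach is more direct and avoids the auxiliary Grassmannian entirely by writing down the section and retraction explicitly; the paper's version has the minor bonus of identifying $\G$ as the restriction of the universal subbundle on $G(k-1,V')$, though this is never used.
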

\begin{proof}
The assertion being obvious if $\dim V = 1$, we assume that $\dim V \ge 2$.
Let $P= \P V_1$, where $V_1 \subseteq V$ is $1$-dimensional and choose a splitting $V = V_1 \oplus V'$. We have a closed embedding $j: G(k-1,V') \hookrightarrow G(k,V)$ defined by $j([W])=[V_1 \oplus W]$, where $V_1 \oplus W \subset V$, or, equivalently, $j$ is the morphism associated to the vector bundle $\O_{G(k-1,V')} \oplus (\U')^*$, where $\U'$ is the universal subbundle of $G(k-1,V')$.  Let $G_P = \{[U] \in G(k,V) : P \in \P U\}$. Then $j$ defines an isomorphism $G(k-1,V') \cong G_P$, hence 
$$\U_{|G_P} \cong j^*\U \cong \O_{G(k-1,V')} \oplus \U' \cong \O_{G_P} \oplus \G'$$
for some rank $k$ vector bundle $\G'$ on $G_P$.
Since $Y \subseteq G_P$, we get that $\U_{|Y} \cong \O_Y \oplus \G$, where $\G=\G'_{|Y}$.
\end{proof}

\begin{lemma}
\label{inj}
Let $\E$ be a globally generated rank $r$ vector bundle on $X$.
\begin{itemize}
\item[(i)] For every $x \in X$ the restriction morphism $\varphi_{|\P(\E_x)} : \P(\E_x) \to P_x$
is an isomorphism onto a linear subspace of dimension $r-1$ in $\P H^0(\E)$.
\end{itemize} 
Now let $y \in \varphi(\P(\E))$. Then:
\begin{itemize}
\item[(ii)] $\pi_{|\varphi^{-1}(y)} : \varphi^{-1}(y) \to X$ is a closed embedding. 
\item[(iii)] $\E_{|\Pi_y} \cong \O_{\Pi_y} \oplus \G$, for some rank $r-1$ vector bundle $\G$ on $\Pi_y$. 
\end{itemize} 
\end{lemma}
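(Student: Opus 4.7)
The plan is to prove (i)--(iii) in order: (i) by the projection formula, (ii) by the functoriality of $\P$ on surjections, and (iii) by Lemma \ref{tango} applied to the classifying map $\Phi$.

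For (i), I use $\pi_\ast \O_{\P(\E)}(1) = \E$ and base change to identify the natural restriction $H^0(\E) = H^0(\P(\E), \O_{\P(\E)}(1)) \to H^0(\P(\E_x), \O_{\P(\E_x)}(1)) = \E_x$ with the evaluation at $x$, which is surjective since $\E$ is globally generated. Hence $\varphi|_{\P(\E_x)}$ is the complete linear embedding of $\P(\E_x) \cong \P^{r-1}$ onto the linear subspace $P_x \subset \P H^0(\E)$.

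For (ii), I observe that the evaluation $H^0(\E) \otimes \O_X \twoheadrightarrow \E$ induces, by the functoriality of $\P$ on surjections of locally free sheaves, a closed $X$-immersion $(\pi, \varphi): \P(\E) \hookrightarrow \P(H^0(\E) \otimes \O_X) = X \times \P H^0(\E)$. Hence $\varphi^{-1}(y) = (\pi, \varphi)^{-1}(X \times \{y\})$ is identified with a closed subscheme of $X \times \{y\} \cong X$ via this immersion, and $\pi|_{\varphi^{-1}(y)}$ is exactly this closed embedding.

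For (iii), I apply Lemma \ref{tango} with $P = y$, $V = H^0(\E)$, $k = r-1$, and $Y = \Phi(\Pi_y) \subset {\mathbb G}(r-1, \P H^0(\E))$; the hypothesis holds because, by definition of $\Pi_y$, one has $\Phi(x) = [P_x] \ni y$ for every $x \in \Pi_y$. The lemma produces a trivial direct summand of $\U|_Y$, which pulls back to a trivial summand of $\Phi^\ast \U|_{\Pi_y}$. Identifying $\Phi^\ast \U$ with $\E$ (via the classifying property of $\Phi$, up to the usual duality coming from the paper's $\P$-convention) transfers this to a trivial direct summand of $\E|_{\Pi_y}$, giving $\E|_{\Pi_y} \cong \O_{\Pi_y} \oplus \G$ with $\G$ of rank $r-1$. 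As a geometric sanity check, (ii) also exhibits the trivial quotient side of this decomposition: the inverse section $\sigma: \Pi_y \to \P(\E|_{\Pi_y})$ of $\pi$ gives a rank-one quotient $\E|_{\Pi_y} \twoheadrightarrow \sigma^\ast \O_{\P(\E)}(1) \cong \O_{\Pi_y}$, trivial because $\varphi \circ \sigma$ is the constant map to $y$.

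The main obstacle is the identification $\Phi^\ast \U \cong \E$ (or $\E^\ast$, with a formal dualization): one must track conventions carefully for $\P(\E) = \Proj \Sym \E$ and for the universal subbundle of ${\mathbb G}(r-1, \P H^0(\E))$ so that the trivial summand produced by Lemma \ref{tango} really lands in $\E|_{\Pi_y}$ and provides the splitting of the rank-one quotient coming from $\sigma$. This is routine bookkeeping but easy to mix up.
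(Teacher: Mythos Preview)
Your proof is correct, and in parts (ii) and (iii) it is cleaner than the paper's.

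For (i) you and the paper do essentially the same thing. For (ii), the paper deduces the closed embedding from (i) by observing that $\pi$ and its differential are injective on each fiber $\varphi^{-1}(y)$ (since $\varphi_{|\P(\E_x)}$ is an isomorphism, the fiber meets each $\P(\E_x)$ in at most one reduced point). Your argument via the closed immersion $(\pi,\varphi):\P(\E)\hookrightarrow X\times\P H^0(\E)$ coming from the surjection $H^0(\E)\otimes\O_X\twoheadrightarrow\E$ is more conceptual and gives the scheme-theoretic statement at once.

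For (iii), the paper does \emph{not} apply Lemma~\ref{tango} to $Y=\Phi(\Pi_y)\subset\mathbb{G}(r-1,\P H^0(\E))$ with the point $y$, as you do. Instead it passes to $M=\Pi_y$, replaces $H^0(\E)$ by the possibly smaller image $U\subset H^0(\E_{|M})$, and then has to argue that the planes $P_{M,x}$ for $x\in M$ all pass through a \emph{common} point of $\P H^0(\E_{|M})$; this is done via a finite projection $p$ and an irreducibility argument on $\Phi_M(M)$. Only then is Lemma~\ref{tango} invoked. Your route is shorter because the needed common point is already visible as $y$ itself: by the very definition of $\Pi_y$ one has $y\in P_x$ for every $x\in\Pi_y$, so Lemma~\ref{tango} applies immediately with $P=y$. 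This also sidesteps the paper's appeal to the irreducibility of $\Phi_M(M)$, which is not obviously available when $\varphi^{-1}(y)$ is not known to be integral (note that the proof of Lemma~\ref{tango} only uses $Y\subseteq G_P$, so integrality of $Y$ is inessential). Your caveat about the duality convention in identifying $\Phi^*\U$ with $\E^*$ (equivalently $\Phi^*\U^*\cong\E$) is exactly the right point to flag; the paper uses $\E_{|M}=\Phi_M^*\U^*$ in the same way.
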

\begin{proof}
To see (i), observe that we have $\P(\E_x) \cong \P^{r-1}$ and $\O_{\P(\E)}(1)_{| \P(\E_x)} \cong \O_{\P^{r-1}}(1)$. Let 
$$W = \Im \{H^0(\O_{\P(\E)}(1)) \to H^0(\O_{\P^{r-1}}(1))\}.$$
Being $\O_{\P(\E)}(1)$ globally generated, we have that so is $W$, hence $\dim W \ge r$. It follows that $W = H^0(\O_{\P^{r-1}}(1))$ and $\varphi_{|\P(\E_x)} = \varphi_{\O_{\P^{r-1}}(1)}$ is an isomorphism onto its image, which is then a linear subspace of dimension $r-1$ in $\P H^0(\E)$. This proves (i) and then (i) implies that $\pi$ and its differential are injective on the fibers of $\varphi$, proving (ii). As for (iii), set $M=\Pi_y$ and consider the globally generated rank $r$ vector bundle $\E_{|M}$ on $M$. Let 
$$U = \Im \{H^0(\O_{\P(\E)}(1)) \to H^0(\O_{\P(\E_{|M})}(1)\}$$
so that $\varphi_{|\P(\E_{|M})} = \varphi_U : \P(\E_{|M}) \to \P U$. Set $\Phi_M=\Phi_{\E_{|M}}, \varphi_M = \varphi_{\E_{|M}}$ and, for any $x \in M$, $P_{M,x}=\varphi_M(\P((\E_{|M})_x)$. We have a commutative diagram
$$\xymatrix{\P(\E_{|M}) \ar[dr]_{\varphi_U} \ar[r]^{\hskip -1cm \varphi_M} & \hskip .2cm \varphi_M(\P(\E_{|M})) \subset \P H^0(\E_{|M}) \hskip -.9cm \ar[d]^p \\ & \hskip 1cm \varphi_U(\P(\E_{|M})) \subset \P U}$$
where $p$ is a finite map. For any $x \in M$, there is a $z \in \varphi^{-1}(y)$ such that $x = \pi(z)$. Hence $z \in \P(\E_x)=\P((\E_{|M})_x)$ and therefore $y = \varphi(z) = \varphi_U(z)=p(\varphi_M(z))$, so that $\varphi_M(z) \in p^{-1}(y) \cap P_{M,x}$. Therefore each $(r-1)$-plane $P_{M,x}$ passes through one of the points of $p^{-1}(y)$. On the other hand, the family of these $(r-1)$-planes is just $\Phi_M(M) \subset {\mathbb G}(r-1, \P H^0(\E_{|M}))$, thus it is irreducible. Since $p^{-1}(y)$ is finite and the condition of passing through a point is closed, we deduce that there is a point $y_M \in \P H^0(\E_{|M})$ such that $y_M \in P_{M,x}$ for every $x \in M$. Set $Y = \Phi_M(M) \subset {\mathbb G}(r-1, \P H^0(\E_{|M}))$. It follows by Lemma \ref{tango} that $\U^*_{|Y} \cong \O_Y \oplus \G$, for some rank $r-1$ vector bundle $\G$ on $Y$. Since $\E_{|M}=\Phi_M^*\U^*$, this proves (iii). 
\end{proof}

\section{$q$-ample vector bundles}

We discuss some generalities on $q$-ample vector bundles.

\begin{defi}
Let $\E$ be a vector bundle on $X$. We set $q_{\rm min}(\E)= \min\{q \ge 0 : \E \ \hbox{is} \ q\hbox{-ample}\}$.
\end{defi}
The definition of $q_{\rm min}(\E)$ implies that $\E$ is $q$-ample if and only if $q \ge q_{\rm min}(\E)$.

\begin{remark}
\label{somm}
We have:
\begin{itemize}
\item [(i)] If $\E$ is a globally generated vector bundle on $X$, then $\E$ is $q$-ample if and only if $\dim F \le q$ for every fiber $F$ of $\varphi=\varphi_{\E} : \P(\E) \to \P H^0(\E)$.
\item [(ii)] If $\E$ is globally generated, then it is $n$-ample. Also $n+r-1-\nu(\E) \le q_{\rm min}(\E) \le n$, where $r$ is the rank of $\E$ and $\nu(\E)$ is the numerical dimension of $\O_{\P(\E)}(1)$. 
\end{itemize}
\end{remark} 
\begin{proof}
(i) is just \cite[Prop.~1.7]{s}. The first part of (ii) follows either by \cite[Prop.~1.7]{s} or by (i), since $\dim \varphi^{-1}(y) = \dim \Pi_y \le n$ for every $y \in \varphi(\P(\E))$. Thus $q_{\rm min}(\E) \le n$. Since $\E$ is $q_{\rm min}(\E)$-ample, for any fiber $F$ of $\varphi$, we have by (i) that
$n+r-1-\nu(\E) \le \dim F \le q_{\rm min}(\E)$. This proves (ii).
\end{proof}

We have the following characterization, which is a special case of \cite[Prop.~1.7]{s}.

\begin{prop} 
\label{genq}
Let $X$ be a smooth variety of dimension $n \ge 1$. Let $\E$ be a globally generated vector bundle on $X$ and let $q \ge 0$ be an integer. Then the following are equivalent:
\begin{itemize} 
\item[(i)] $\E$ is $q$-ample;
\item[(ii)] $\E_{|Z}$ does not have a trivial direct summand for every subvariety $Z \subseteq X$ of dimension $q+1$;
\item[(iii)] $h^0(\E^*_{|Z})=0$ for every subvariety $Z \subseteq X$ of dimension $q+1$.
\end{itemize} 
\end{prop}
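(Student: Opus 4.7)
The plan is to run the cycle (ii) $\Rightarrow$ (iii) $\Rightarrow$ (i) $\Rightarrow$ (ii), using Remark \ref{somm}(i) as the bridge between cohomological $q$-ampleness and the fiber dimension of $\varphi$. The implication (ii) $\Rightarrow$ (iii) is immediate: dualizing a splitting $\E_{|Z} \cong \O_Z \oplus \F$ produces a nowhere vanishing section of $\E^*_{|Z}$, so $h^0(\E^*_{|Z}) \ge 1$.

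For (iii) $\Rightarrow$ (i) I argue by contrapositive. Suppose $Z$ is a subvariety of dimension $q+1$ admitting a nonzero section $s \in H^0(\E^*_{|Z})$, equivalently a nonzero morphism $\psi : \E_{|Z} \to \O_Z$. The first step is to verify that $\psi$ is surjective: since $\E_{|Z}$ is globally generated and $Z$ is projective and integral (so $H^0(\O_Z) = \C$), the composition
\[
H^0(\E_{|Z}) \otimes \O_Z \twoheadrightarrow \E_{|Z} \xrightarrow{\psi} \O_Z
\]
is determined by the linear functional $\lambda : H^0(\E_{|Z}) \to \C$, $f \mapsto \psi(f)$, which is nonzero because the evaluation map is surjective and $\psi \ne 0$; for any $f$ with $\lambda(f) \ne 0$, the restriction of the composition to the summand $\C f \otimes \O_Z$ already surjects onto $\O_Z$, forcing $\psi$ to be surjective. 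Next, the surjection $\psi$ yields a section $\sigma : Z \to \P(\E_{|Z}) \hookrightarrow \P(\E)$ with $\sigma^*\O_{\P(\E)}(1) \cong \O_Z$; since $\varphi \circ \sigma$ is the morphism to $\P H^0(\E)$ associated to the trivial line bundle on $Z$, it is constant, so $\sigma(Z)$, which has dimension $q+1$, is contained in a single fiber of $\varphi$, contradicting (i) via Remark \ref{somm}(i).

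For (i) $\Rightarrow$ (ii), again by contrapositive, suppose $\E$ is not $q$-ample. Then Remark \ref{somm}(i) produces some $y \in \varphi(\P(\E))$ with $\dim \varphi^{-1}(y) \ge q+1$; Lemma \ref{inj}(ii) identifies $\varphi^{-1}(y)$ with its image $\Pi_y \subseteq X$ via $\pi$, so $\dim \Pi_y \ge q+1$; and Lemma \ref{inj}(iii) supplies a splitting $\E_{|\Pi_y} \cong \O_{\Pi_y} \oplus \G$. Selecting an irreducible component of $\Pi_y$ of dimension at least $q+1$ and passing to a closed irreducible subvariety $Z$ of dimension exactly $q+1$, the restriction of the splitting reads $\E_{|Z} \cong \O_Z \oplus \G_{|Z}$, negating (ii). The only technical subtlety in the whole argument is the automatic surjectivity of $\psi$ used in the (iii) $\Rightarrow$ (i) step, which is elementary once one inspects the evaluation map for a globally generated bundle; everything else reduces mechanically to the statements of Section 3 together with Remark \ref{somm}(i).
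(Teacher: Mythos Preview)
Your mathematics is complete and runs along the same lines as the paper (Remark \ref{somm}(i) plus Lemma \ref{inj}), but you have systematically confused the contrapositive with the inverse: in each step you assume the negation of the \emph{hypothesis} of the labeled implication and deduce the negation of its \emph{conclusion}, which proves the converse. Concretely, your ``(ii) $\Rightarrow$ (iii)'' paragraph shows $\neg$(ii) $\Rightarrow$ $\neg$(iii), hence (iii) $\Rightarrow$ (ii); your ``(iii) $\Rightarrow$ (i)'' paragraph shows $\neg$(iii) $\Rightarrow$ $\neg$(i), hence (i) $\Rightarrow$ (iii); and your ``(i) $\Rightarrow$ (ii)'' paragraph shows $\neg$(i) $\Rightarrow$ $\neg$(ii), hence (ii) $\Rightarrow$ (i). Fortunately these three converses still form a closed cycle (iii) $\Rightarrow$ (ii) $\Rightarrow$ (i) $\Rightarrow$ (iii), so the equivalence \emph{is} established---but every label is backwards and should be corrected.

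On the comparison: the paper quotes \cite[Lemma~3.9]{oa} for (ii) $\Leftrightarrow$ (iii) and then handles (i) $\Leftrightarrow$ (ii) exactly as in your third and second paragraphs (your $\P(\O_Z)$-section argument is the paper's ``$\varphi(\P(\O_Z))$ is a point''). Your elementary surjectivity argument for a nonzero $\psi:\E_{|Z}\to\O_Z$---picking $f\in H^0(\E_{|Z})$ with $\lambda(f)\ne 0$ so that $\O_Z\xrightarrow{f}\E_{|Z}\xrightarrow{\psi}\O_Z$ is an isomorphism---in fact already \emph{splits} $\psi$, so it gives a self-contained proof of the nontrivial direction of the cited lemma; you could note this explicitly.
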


\begin{proof}
The equivalence (ii)-(iii) follows by \cite[Lemma 3.9]{oa}. As for the equivalence (i)-(ii), assume first that $\E_{|Z}$ does not have a trivial direct summand for every subvariety $Z \subseteq X$ of dimension $q+1$. If $\E$ is not $q$-ample, there exists by Remark \ref{somm}(i) an $y \in \varphi(\P(\E))$ such that $\dim \varphi^{-1}(y) \ge q+1$. Set $M=\Pi_y$. By Lemma \ref{inj}(ii) we have that $M \cong \varphi^{-1}(y)$, hence $\dim M \ge q+1$. Also, Lemma \ref{inj}(iii) implies that $\E_{|M} \cong \O_M \oplus \G$, for some vector bundle $\G$ on $M$. But then, for any subvariety $Z \subseteq M$ with $\dim Z = q+1$, we have that $\E_{|Z} \cong \O_Z \oplus \G_{|Z}$, contradicting the hypothesis. Vice versa, assume that $\E$ is $q$-ample and let $Z \subseteq X$ be a subvariety of dimension $q+1$. If $\E_{|Z} \cong \O_Z \oplus \G$, for some vector bundle $\G$ on $Z$, then $Z \cong \P(\O_Z) \subseteq \P(\E_{|Z}) \subseteq \P(\E)$ and
$$\O_{\P(\E)}(1)_{|\P(\O_Z)} \cong \O_{\P(\E_{|Z})}(1)_{|\P(\O_Z)} \cong \O_{\P(\O_Z)}(1) \cong \O_Z$$
hence $\varphi(\P(\O_Z))$ is a point. Therefore $\varphi$ has a fiber of dimension at least $q+1$, contradicting Remark \ref{somm}(i).
\end{proof}

\section{Proofs of the main results}

In the case of Ulrich vector bundles, we can do better than Proposition \ref{genq}. 

\renewcommand{\proofname}{Proof of Theorem \ref{ulrq}}
\begin{proof}
Recall that $\E$ is globally generated since it is $0$-regular. The equivalence (ii)-(iii) follows by \cite[Lemma 3.9]{oa}. As for the equivalence (i)-(ii), assume first that $\E$ is $q$-ample. Then either $X$ does not contain a linear space of dimension $q+1$ or it follows by Proposition \ref{genq} that $\E_{|M}$ does not have a trivial direct summand for every linear space $M \subseteq X$ of dimension $q+1$. To see the converse, let $y \in \varphi(\P(\E))$ and let $\Pi_y = \pi(\varphi^{-1}(y))$, so that $\Pi_y \cong \varphi^{-1}(y)$ by Lemma \ref{inj}(ii). By \cite[Thm.~2]{ls} we have that $\Pi_y$ is a linear space contained in $X$. Now, if $X$ does not contain a linear space of dimension $q+1$, then $\dim \varphi^{-1}(y) = \dim \Pi_y \le q$ for every $y \in \varphi(\P(\E))$. Hence $\E$ is $q$-ample by Remark \ref{somm}(i). On the other hand, assume that $\E_{|M}$ does not have a trivial direct summand for every linear space $M \subseteq X$ of dimension $q+1$. If $\E$ is not $q$-ample, there exists by Remark \ref{somm}(i) an $y \in \varphi(\P(\E))$ such that $\dim \varphi^{-1}(y) \ge q+1$. Hence $\dim \Pi_y \ge q+1$, and picking a linear subspace $M \subseteq \Pi_y$ with $\dim M = q+1$, we get a contradiction by Lemma \ref{inj}(iii).
\end{proof}
\renewcommand{\proofname}{Proof}

We also have. 

\renewcommand{\proofname}{Proof of Corollary \ref{ulrqc}}
\begin{proof} 
First we prove (i). If $\E$ is not $(n-1)$-ample, then it follows by Theorem \ref{ulrq} that $X=\P^n$ and $\E \cong \O_X \oplus \G$, for some vector bundle $\G$ on $X$. But then $\O_X$ is Ulrich and therefore $(X, \O_X(1), \E) = (\P^n, \O_{\P^n}(1), \O_{\P^n}^{\oplus r})$ by \cite[Lemma 4.2]{aclr}(vi) and \cite[Prop.~2.1]{es} (or \cite[Thm.~2.3]{be}). On the other hand, if $(X, \O_X(1), \E) = (\P^n, \O_{\P^n}(1),  \O_{\P^n}^{\oplus r})$, then $\P(\E) \cong \P^{r-1}\times \P^n$ and $\varphi=\pi_1 : \P^{r-1}\times \P^n \to \P^{r-1}$ has $n$-dimensional fibers, hence $\O_{\P^n}^{\oplus r}$ is not $(n-1)$-ample by Remark \ref{somm}(i). This proves (i). As for (ii), using Theorem \ref{ulrq}, we just need to prove that $X \subset \P^N$ does not contain linear spaces of dimension $n-1$ unless $(X, \O_X(1), \E) = (\P^n, \O_{\P^n}(1),  \O_{\P^n}^{\oplus r})$. To this end, let $A$ be the ample generator of $N^1(X)$ and let $H \in |\O_X(1)|$, so that $H \equiv hA$. If $X$ contains a linear space $M$ of dimension $n-1$, then $M \equiv aA$ for some integer $a \ge 1$, and therefore 
$$1 = M H^{n-1} =ah^{n-1}A^n$$
hence $a=h=A^n=1$ and then $H^n=1$, so that $(X, \O_X(1), \E) = (\P^n, \O_{\P^n}(1),  \O_{\P^n}^{\oplus r})$ by \cite[Prop.~2.1]{es} (or \cite[Thm.~2.3]{be}). This proves (ii). 
\end{proof}  
\renewcommand{\proofname}{Proof}

\section{Augmented base loci of Ulrich bundles}

Given a vector bundle $\E$, it follows by \cite[Thm.~1.1]{bkkmsu} that $\B_+(\E) \ne \emptyset$ if and only if $\E$ is not ample if and only if $\E$ is not $0$-ample. More generally, given $q \ge 0$, we have by Proposition \ref{genq} that $\E$ is not $q$-ample if and only if there exists a subvariety $Z \subseteq X$ of dimension $q+1$ such that $\E_{|Z}$ has a trivial direct summand. For any such subvariety, we have that $Z \cong \P(\O_Z) \subseteq \P(\E)$ and, since $\O_{\P(\E)}(1)_{|\P(\O_Z)} = \O_{\P(\O_Z)}(1) \cong \O_Z$, it follows that $\P(\O_Z) \subseteq \B_+(\O_{\P(\E)}(1))$. If $\pi : \P(\E) \to X$ is the natural map, then \cite[Prop.~3.2]{bkkmsu} implies that 
$$Z = \pi(\P(\O_Z)) \subseteq \pi(\B_+(\O_{\P(\E)}(1)))=\B_+(\E).$$
It is well-known, using  for example \cite[Prop.~2.3]{bbp}, that one cannot expect, in general, that $\B_+(\E)$ is the union of all such $Z$'s, already in the case of line bundles.

Now assume that $\E$ is Ulrich and not ample. It follows by \cite[Thm.~1]{ls} that there is a line $L \subseteq X$ such that $\E_{|L}$ is not ample. It was recently proved by Buttinelli \cite[Thm.~2]{bu} that
$$\B_+(\E) = \bigcup\limits_{L}L$$
where $L$ runs among all lines contained in $X$ such that $\E_{|L}$ is not ample. Equivalently $L$ runs among all lines contained in $X$ such that $\E_{|L}$ has a trivial direct summand. This is the case $q=0$ of a more general question. In fact, when $\E$ is not $q$-ample, we have by Theorem \ref{ulrq} that there is a linear space $M \subseteq X$ of dimension $q+1$ such that $\E_{|M}$ has a trivial direct summand. As above, this implies that $M \subseteq \B_+(\E)$.
Question: is $\B_+(\E)$ the union of all such $M$'s?

\end{document}